\newcommand{\R}{\mathbb R}
\newcommand{\N}{\mathbb N}
\newcommand{\C}{\mathbb C}
\newcommand{\Z}{\mathbb Z}
\newtheorem{theorem}{Theorem}[section]
\newtheorem{rem}{Remark}[section]
\newcommand{\RE}{\Re\it{e}}
\newcommand{\IM}{\Im\it{m}}
\begin{document}
\title{}

\markboth{Pierre Godard}  {Squares From Any Quadrilateral}

\begin{tabular}{l}
INTERNATIONAL JOURNAL OF GEOMETRY \\ 
Vol. 5 (2016), No. 1, 60 - 71%
\end{tabular}
\FRAME{itbpF}{0.5993in}{0.6054in}{0.2214in}{}{}{Graphic2}{\special{language
"Scientific Word";type "GRAPHIC";display "USEDEF";valid_file "F";width
0.5993in;height 0.6054in;depth 0.2214in;original-width
1.9043in;original-height 1.8896in;cropleft "0";croptop "1";cropright
"1";cropbottom "0";filename 'Graphic2';file-properties "XNPEU";}}

\centerline{}\bigskip

\centerline{}\bigskip

\centerline {\Large{\bf SQUARES FROM ANY QUADRILATERAL}}\centerline{}%
\centerline {\Large{\bf }}

\bigskip 

\begin{center}
{\large PIERRE GODARD}

\centerline{}
\end{center}

\bigskip

\textbf{Abstract.} In (Publ. Math. IHES, S88:43—46, 1998) A. Connes proposed an algebraic proof of Morley's trisector theorem.
He observed that the points of intersection of the trisectors are the fixed points of pairwise products of rotations around vertices of the triangle with angles two thirds of the corresponding angles of the triangle.
This paper enquires for similar results when the initial polygon is an arbitrary quadrilateral.
First we show that, when correctly gathered, fixed points of products of rotations around vertices of the quadrilateral with angles $(2n+1)/2$ of the corresponding angles of the quadrilateral form essentially six parallelograms for any integer $n$.
Several congruence relations are exhibited between these parallelograms.
Then, we show that if the original quadrilateral is itself a parallelogram, then for any integer $n$ four of the resulting parallelograms are squares.
Hence we present a function which, when applied twice, gives squares from any quadrilateral.
The proofs use only algebraic methods of undergraduate level.

\bigskip

\section{Notations and background}

In around 1899, F. Morley proved a theorem of Euclidean geometry that now bears his name: ``In any triangle, the intersections of adjacent trisectors form the vertices of an equilateral triangle".
This triangle is called Morley's triangle.
In 1998, A. Connes published a proof of Morley's theorem ``as a group theoretic property of the action of the affine group on the line" \cite{aConnes98}.
More precisely, for any triangle with vertices $a_1$, $a_2$, $a_3$ and angles $\hat{a}_1$, $\hat{a}_2$, $\hat{a}_3$, he defined $g_1$ as the rotation about $a_1$ through the angle $2\hat{a}_1/3$, and similarly for $g_2$ and $g_3$.
Then, from $g_1^3g_2^3g_3^3=1$ he deduced $\mathrm{fix}(g_1g_2)+j\mathrm{fix}(g_2g_3)+j^2\mathrm{fix}(g_3g_1)=0$, where $j$ is a non-trivial cubic root of unity and $\mathrm{fix}(g_1g_2)$ is the fixed point of the rotation $g_1g_2$, \textit{etc.}
This equation shows that the points $\mathrm{fix}(g_1g_2)$, $\mathrm{fix}(g_2g_3)$ and $\mathrm{fix}(g_3g_1)$
form the vertices of an equilateral triangle, which is Morley's triangle.
Inspired by Connes method, we exhibit identity relations between rotations about the vertices of a quadrilateral through angles proportional to the angles at these vertices, and show how fixed points of products of these rotations define parallelograms or squares.

--------------------------------------

\textbf{Keywords and phrases: }Euclidean plane geometry, parallelogram, square, Morley's triangles, rotation

\textbf{(2010)Mathematics Subject Classification: }51M04, 51M15

Received: 13.12.2015. \ In revised form: 07.04.2016. \ Accepted:
11.04.2016.\bigskip

\bigskip

We use the isomorphism between the plane $\R^2$ and the complex line $\C$; the imaginary unit is denoted by $i$.
The real and imaginary parts of $a\in\C$ are denoted respectively by $\RE[a]$ and $\IM[a]$.
We denote by $z_a$ the coordinate of the point $a\in\C$.
Let $r_{\alpha}$ be the rotation about the origin $O$ through the angle $\alpha$, and let $t_{z_a}$ be the translation by the vector joining $O$ to the point $a$ with coordinate $z_a$:
\begin{equation*}
r_{\alpha}:\C\rightarrow\C,z\mapsto e^{i\alpha}z
\end{equation*}
and
\begin{equation*}
t_{z_a}:\C\rightarrow\C,z\mapsto z+z_a.
\end{equation*}
Then the rotation about the point $a$ through angle $\alpha$ is
\begin{equation*}
r_{a,\alpha}=t_{z_a}\circ r_{\alpha}\circ(t_{z_a})^{-1}:z\mapsto e^{i\alpha}(z-z_a)+z_a.
\end{equation*}
Composition of operations, denoted here by $\circ$, will not be explicitely written anymore.

The composition of two rotations is as follows:
\begin{equation*}
r_{a_1,\alpha_1}r_{a_2,\alpha_2}z=e^{i(\alpha_1+\alpha_2)}z+z_{a_1}(1-e^{i\alpha_1})+z_{a_2}e^{i\alpha_1}(1-e^{i\alpha_2})
\end{equation*}
of which we deduce that if $\alpha_1+\alpha_2\neq 0\ mod(2\pi)$, $r_{a_1,\alpha_1}r_{a_2,\alpha_2}$ is a rotation about the point with coordinate $\big(z_{a_1}(1-e^{i\alpha_1})+z_{a_2}e^{i\alpha_1}(1-e^{i\alpha_2})\big)/(1-e^{i(\alpha_1+\alpha_2)})$ through the angle $\alpha_1+\alpha_2$, and if $\alpha_1+\alpha_2=0\ mod(2\pi)$, $r_{a_1,\alpha_1}r_{a_2,\alpha_2}$ is a translation by $z_{a_1}(1-e^{i\alpha_1})+z_{a_2}(e^{i\alpha_1}-1)$.
In particular, if $\alpha_1=\alpha_2=\pi$, then $r_{a_1,\alpha_1}r_{a_2,\alpha_2}=t_{2(z_{a_1}-z_{a_2})}$. 

The notation $P=[a_1,a_2,a_3,a_4]$ means that $P$ is the oriented quadrilateral whose vertices are successively $a_1$, $a_2$, $a_3$, $a_4$, and edges $a_1a_2$, $a_2a_3$, $a_3a_4$, $a_4a_1$.
We denote by $P'$ the quadrilateral $P$ with the opposite orientation, that is $[a_1,a_2,a_3,a_4]'=[a_1,a_4,a_3,a_2]$.

Lastly, $\{i,j,k,l\}=\{1,2,3,4\}$ means that the tuple $(i,j,k,l)$ is a permutation of $(1,2,3,4)$.

In the next section we define, for any $n\in\Z$, points $b_{ijkl,n}$ whose coordinates are functions of the coordinates of the vertices of a quadrilateral $P$.
We show that these points group in parallelograms, and exhibit some properties of these parallelograms.
In the third section we show that if $P$ is itself a parallelogram, then the exhibited parallelograms are mostly squares.

\bigskip

\section{Parallelograms from any quadrilateral}

\begin{theorem}
\textit{Let $P$ be a quadrilateral and denote its four vertices by $a_1$, $a_2$, $a_3$ and $a_4$; for $i\in\{1,2,3,4\}$ let $\hat{a}_i$ be the angle at the vertex $a_i$, and for any $n\in\Z$ define $\alpha_{i,n}$ by $\alpha_{i,n}:=\frac{2n+1}{2}\hat{a}_i$.
Let $(i,j,k,l)$ be a permutation of $(1,2,3,4)$.
Finally, let $b_{ijkl,n}$ be the fixed point of the rotation $r_{a_i,\alpha_{i,n}}r_{a_j,\alpha_{j,n}}$ $r_{a_k,\alpha_{k,n}}r_{a_l,\alpha_{l,n}}$.
Then the following holds:
\begin{enumerate}
\item
the quadrilaterals $P_{ijkl,n}:=[b_{ijkl,n},b_{ijlk,n},b_{jilk,n},b_{jikl,n}]$ and $P_{ijkl,n}':=[b_{ijkl,n},b_{jikl,n},b_{jilk,n},b_{ijlk,n}]$ are parallelograms;
\item
$P_{ijkl,n}$ is congruent to $P_{klij,-n-1}$; More precisely, $b_{ijkl,n}=b_{lkji,-n-1}$, so that the rotation through $\pi$ about the center of $P_{ijkl,n}$ tranforms $P_{ijkl,n}$ into $P_{klij,-n-1}'$;
\item
$P_{ijkl,n}$ is congruent to $P_{klji,n}$; More precisely, let $R_{ij,n}$ be the rotation through the angle $-(\alpha_{i,n}+\alpha_{j,n})$ about the center $O_{ij,n}$ whose coordinate is
$$z_{O_{ij,n}}=\frac{1}{2(1-e^{i(\alpha_{i,n}+\alpha_{j,n})})}\Big(z_{a_i}(1-e^{i\alpha_{i,n}})(1+e^{i\alpha_{j,n}})+z_{a_j}(1-e^{i\alpha_{j,n}})(1+e^{i\alpha_{i,n}})\Big);$$
then $R_{ij,n}P_{ijkl,n}=P_{klji,n}'$;
\item
let $\mathcal{A}_Q$ be the area of a parallelogram $Q$.
Then $\mathcal{A}_{P_{ijkl,n}}=\mathcal{A}_{P_{ikjl,n}}+\mathcal{A}_{P_{ilkj,n}}$ and $\mathcal{A}_{P_{ijkl,n}}=\mathcal{A}_{P_{ijlk,-n-1}}$.
\end{enumerate}\label{th_parallelogram1}}
\end{theorem}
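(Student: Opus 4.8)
The plan is to derive one explicit formula for $z_{b_{ijkl,n}}$ and turn every assertion into a finite algebraic identity. Put $u_m:=e^{i\alpha_{m,n}}$. Since the angles of a quadrilateral sum to $2\pi$ we get $\alpha_{i,n}+\alpha_{j,n}+\alpha_{k,n}+\alpha_{l,n}=(2n+1)\pi$, hence $u_iu_ju_ku_l=e^{i(2n+1)\pi}=-1$; so any product of the four rotations has total angle $\equiv\pi\pmod{2\pi}$, is therefore a half-turn with a unique fixed point, and the $b_{ijkl,n}$ are well defined. Iterating the composition rule recalled above three times and simplifying with $u_iu_ju_ku_l=-1$, the map $r_{a_i,\alpha_{i,n}}r_{a_j,\alpha_{j,n}}r_{a_k,\alpha_{k,n}}r_{a_l,\alpha_{l,n}}$ sends $z$ to $-z+C_{ijkl}$ with
\[ C_{ijkl}=z_{a_i}+z_{a_l}+u_i(z_{a_j}-z_{a_i})+u_iu_j(z_{a_k}-z_{a_j})+u_iu_ju_k(z_{a_l}-z_{a_k}), \]
so $z_{b_{ijkl,n}}=\tfrac12C_{ijkl}$. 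From here everything is bookkeeping in the $u_m$'s and $z_{a_m}$'s; the only structural facts used are $u_iu_ju_ku_l=-1$ and (for (4)) that $\Pi:=\prod_m(1-u_m)$ is purely imaginary.

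For (1) I would check $C_{ijkl}+C_{jilk}=C_{ijlk}+C_{jikl}$ by comparing coefficients of $1,u_i,u_j,u_iu_j,u_iu_ju_k,u_iu_ju_l$; they all agree, the only non-obvious one being the coefficient of $u_iu_j$, which on both sides equals $z_{a_k}+z_{a_l}-z_{a_i}-z_{a_j}$. This says $z_{b_{ijkl,n}}+z_{b_{jilk,n}}=z_{b_{ijlk,n}}+z_{b_{jikl,n}}$, i.e. the two diagonals of $P_{ijkl,n}$ share a midpoint, so $P_{ijkl,n}$ is a parallelogram; $P'_{ijkl,n}$ has the same vertices and the same pair of diagonals and is one too. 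For (2), since $\alpha_{m,-n-1}=-\alpha_{m,n}$ one replaces each $u_m$ by $u_m^{-1}$ and uses $u_l^{-1}=-u_iu_ju_k$, $u_l^{-1}u_k^{-1}=-u_iu_j$, $u_l^{-1}u_k^{-1}u_j^{-1}=-u_i$ to verify term by term that $C_{lkji,-n-1}=C_{ijkl,n}$, i.e. $b_{ijkl,n}=b_{lkji,-n-1}$. Applying this equality to the four vertices of $P_{ijkl,n}=[A_1,A_2,A_3,A_4]$ exhibits $P'_{klij,-n-1}$ as the image of $P_{ijkl,n}$ under $z\mapsto(z_{A_1}+z_{A_3})-z$, the half-turn about the common midpoint of the diagonals — that is, about the centre of the parallelogram — which gives both the congruence and the precise statement.

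For (3) I would take $R_{ij,n}$ as defined and verify it carries the four vertices of $P_{ijkl,n}$ onto those of $P'_{klji,n}=[b_{klji,n},b_{lkji,n},b_{lkij,n},b_{klij,n}]$. Since $R_{ij,n}(z)=(u_iu_j)^{-1}(z-z_{O_{ij,n}})+z_{O_{ij,n}}$, the identity $R_{ij,n}(b_{ijkl,n})=b_{klji,n}$ amounts to $u_iu_jC_{klji}-C_{ijkl}=2z_{O_{ij,n}}(u_iu_j-1)$; a short computation using only $u_iu_ju_ku_l=-1$ (it cancels two of the four terms on each side) gives $u_iu_jC_{klji}-C_{ijkl}=-z_{a_i}(1-u_i)(1+u_j)-z_{a_j}(1-u_j)(1+u_i)$, which is precisely $2z_{O_{ij,n}}(u_iu_j-1)$ for the stated $z_{O_{ij,n}}$. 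As $z_{O_{ij,n}}$ and the angle $-(\alpha_{i,n}+\alpha_{j,n})$ are symmetric in $i\leftrightarrow j$ and the identity is symmetric in $k\leftrightarrow l$, exchanging those roles yields $R_{ij,n}(b_{ijlk,n})=b_{lkji,n}$, $R_{ij,n}(b_{jikl,n})=b_{klij,n}$ and $R_{ij,n}(b_{jilk,n})=b_{lkij,n}$ as well, so $R_{ij,n}P_{ijkl,n}=P'_{klji,n}$.

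For (4), the two edges of $P_{ijkl,n}$ at $b_{ijkl,n}$ are $b_{ijlk,n}-b_{ijkl,n}=\tfrac12u_iu_j(1-u_k)(1-u_l)(z_{a_l}-z_{a_k})$ and $b_{jikl,n}-b_{ijkl,n}=\tfrac12(1-u_i)(1-u_j)(z_{a_j}-z_{a_i})$, so its area is $\IM\!\big[\overline{(b_{ijlk,n}-b_{ijkl,n})}\,(b_{jikl,n}-b_{ijkl,n})\big]$, which by $u_iu_ju_ku_l=-1$ and $\overline{1-u_m}=-(1-u_m)/u_m$ collapses to $\tfrac14\IM\!\big[\Pi\,\overline{(z_{a_k}-z_{a_l})}(z_{a_j}-z_{a_i})\big]$. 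Now $1-u_m=-2i\sin(\alpha_{m,n}/2)e^{i\alpha_{m,n}/2}$, so $\Pi=16\,e^{i(2n+1)\pi/2}\prod_m\sin(\alpha_{m,n}/2)=16i(-1)^n\prod_m\sin(\alpha_{m,n}/2)$ is purely imaginary. Hence $\mathcal A_{P_{ijkl,n}}-\mathcal A_{P_{ikjl,n}}-\mathcal A_{P_{ilkj,n}}=\tfrac14\IM(\Pi B)$ where $B$ is a sum of three terms of the shape $\overline{z_{a_p}}z_{a_q}-\overline{z_{a_q}}z_{a_p}$, hence purely imaginary; a product of two purely imaginary numbers is real, so the difference is $0$. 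Finally $\alpha_{m,-n-1}=-\alpha_{m,n}$ gives $\Pi_{-n-1}=-\Pi_n$ (each of the four sines changes sign and $(-1)^n\mapsto-(-1)^n$), whence $\mathcal A_{P_{ijlk,-n-1}}=\tfrac14\IM[\Pi_{-n-1}\overline{(z_{a_l}-z_{a_k})}(z_{a_j}-z_{a_i})]=\tfrac14\IM[\Pi_n\overline{(z_{a_k}-z_{a_l})}(z_{a_j}-z_{a_i})]=\mathcal A_{P_{ijkl,n}}$ (this also follows from (2)). The only non-mechanical points of the whole argument are the two facts flagged at the outset — $u_iu_ju_ku_l=-1$, which makes every length-four product a half-turn and drives (1)–(3), and the purely imaginary nature of $\Pi$, which drives (4); part (3)'s centre identity is where the algebra is heaviest, and keeping straight which permutation indexes which parallelogram and which of its edges is the main thing to be careful about.
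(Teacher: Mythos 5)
Your proposal is correct --- I checked the coefficient comparison in (1), the substitution $u_m\mapsto u_m^{-1}$ together with $u_iu_ju_ku_l=-1$ in (2), the cancellation of the $z_{a_k}$- and $z_{a_l}$-terms in the centre identity of (3), and the factorization $\Pi=16i(-1)^n\prod_m\sin(\alpha_{m,n}/2)$ in (4) --- but it diverges from the paper in the one place that matters conceptually, namely part (1). There the paper never computes a fixed point: it observes that each length-four product $r_ir_jr_kr_l$ is an involution (a half-turn, the total angle being $(2n+1)\pi$), telescopes the eight-fold word $(r_ir_jr_kr_lr_ir_jr_lr_k)(r_jr_ir_lr_kr_jr_ir_kr_l)$ to the identity using those involutions, and separately identifies that word as the translation by $2(z_{b_{ijkl,n}}-z_{b_{ijlk,n}}+z_{b_{jilk,n}}-z_{b_{jikl,n}})$, whose vanishing is exactly the parallelogram condition. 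That is the Connes-style group-theoretic argument which is the point of the paper; it proves (1) with no coordinates at all, whereas your explicit formula for $z_{b_{ijkl,n}}$ buys uniformity, since the single expression for $C_{ijkl}$ drives all four parts. For (2)--(4) the two proofs coincide in substance: the paper derives the same formula for $z_{b_{ijkl,n}}$ and the same two edge vectors and leaves the rest as ``straightforward computations,'' which you have actually carried out; your half-angle factorization of $\Pi$ is tidier than the paper's expansion of $\prod_m(1-e^{-i\alpha_{m,n}})$ into a sum of sines, and your remark that $B$ is purely imaginary is the same fact the paper states as the vanishing of a real part. One small nit: the final area identity follows from (2) \emph{and} (3) combined (as the paper's own derivation shows), not from (2) alone; your direct computation of it is nonetheless valid.
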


\begin{proof}
\textit{First, we note that $b_{ijkl,n}$ is well-defined since $\alpha_{i,n}+\alpha_{j,n}+\alpha_{k,n}+\alpha_{l,n}=(2n+1)\pi$ when $i$, $j$, $k$ and $l$ are all different.
We now prove each item separately.
\\
\begin{enumerate}
\item For convenience, for $i\in\{1,2,3,4\}$, we write $r_i$ for $r_{a_i,\alpha_{i,n}}$.
Now if $\{i,j,k,l\}=\{1,2,3,4\}$ then $r_lr_kr_jr_i$ is the rotation about $b_{lkji,n}$ through the angle $(2n+1)\pi$.
Hence it is an involution.
Similarly, $r_ir_jr_kr_l$ is an involution.
Thus, we have
\begin{align*}
(r_ir_jr_kr_lr_ir_jr_lr_k)(r_jr_ir_lr_kr_jr_ir_kr_l)&=r_ir_jr_kr_lr_ir_j(r_lr_kr_jr_ir_lr_kr_jr_i)r_kr_l \\
&=r_ir_jr_kr_lr_ir_jr_kr_l \\
&=id
\end{align*}
where $id:z\mapsto z$.
On the other hand
\begin{align*}
&(r_ir_jr_kr_lr_ir_jr_lr_k)(r_jr_ir_lr_kr_jr_ir_kr_l) \\
=&(r_{b_{ijkl,n},(2n+1)\pi}r_{b_{ijlk,n},(2n+1)\pi})(r_{b_{jilk,n},(2n+1)\pi}r_{b_{jikl,n},(2n+1)\pi}) \\
=&t_{2(z_{b_{ijkl,n}}-z_{b_{ijlk,n}})}t_{2(z_{b_{jilk,n}}-z_{b_{jikl,n}})} \\
=&t_{2(z_{b_{ijkl,n}}-z_{b_{ijlk,n}}+z_{b_{jilk,n}}-z_{b_{jikl,n}})}.
\end{align*}
Hence $z_{b_{ijkl,n}}-z_{b_{ijlk,n}}+z_{b_{jilk,n}}-z_{b_{jikl,n}}=0$,
which means that the sides $b_{ijkl,n}b_{ijlk,n}$ and $b_{jikl,n}b_{jilk,n}$ have equal length and are parallel, a characteristic property for $P_{ijkl,n}=[b_{ijkl,n},b_{ijlk,n},b_{jilk,n},b_{jikl,n}]$ to be a parallelogram.
That $P_{ijkl,n}'$ is also a parallelogram is now obvious.
\item
We first compute that
\begin{align}\label{eq_z_b_ijkl}
&z_{b_{ijkl,n}}=\frac{1}{2}\Big(z_{a_i}(1-e^{i\alpha_{i,n}})+z_{a_j}e^{i\alpha_{i,n}}(1-e^{i\alpha_{j,n}}) \nonumber\\
&\qquad+z_{a_k}e^{i(\alpha_{i,n}+\alpha_{j,n})}(1-e^{i\alpha_{k,n}})+z_{a_l}e^{i(\alpha_{i,n}+\alpha_{j,n}+\alpha_{k,n})}(1-e^{i\alpha_{l,n}})\Big)
\end{align}
when $\{i,j,k,l\}=\{1,2,3,4\}$.
Then, we have
$$\alpha_{i,-n-1}=\frac{2(-n-1)+1}{2}\hat{a}_i=-\frac{2n+1}{2}\hat{a}_i=-\alpha_{i,n}.$$
From this, a straightforward computation shows that $z_{b_{ijkl,n}}=z_{b_{lkji,-n-1}}$.
Hence, denoting by $R_{ijkl,n}$ the rotation through $\pi$ about the center of $P_{ijkl,n}$ we have
\begin{align*}
R_{ijkl,n}P_{ijkl,n}&=R_{ijkl,n}[b_{ijkl,n},b_{ijlk,n},b_{jilk,n},b_{jikl,n}] \\
&=[b_{jilk,n},b_{jikl,n},b_{ijkl,n},b_{ijlk,n}] \\
&=[b_{klij,-n-1},b_{lkij,-n-1},b_{lkji,-n-1},b_{klji,-n-1}] \\
&=[b_{klij,-n-1},b_{klji,-n-1},b_{lkji,-n-1},b_{lkij,-n-1}]' \\
&=P_{klij,-n-1}'.
\end{align*}
\item 
A straightforward computation shows that $z_{R_{ij,n}b_{ijkl,n}}=z_{b_{klji,n}}$.
Hence
\begin{align*}
R_{ij,n}P_{ijkl,n}&=R_{ij,n}[b_{ijkl,n},b_{ijlk,n},b_{jilk,n},b_{jikl,n}] \\
&=[b_{klji,n},b_{lkji,n},b_{lkij,n},b_{klij,n}] \\
&=P_{klji,n}'.
\end{align*}
\item 
The area of the parallelogram $P_{ijkl,n}$ is given by
$$\mathcal{A}_{P_{ijkl,n}}=\IM[(z_{b_{jikl,n}}-z_{b_{ijkl,n}})\overline{(z_{b_{ijlk,n}}-z_{b_{ijkl,n}})}].$$
But, with the help of equation \ref{eq_z_b_ijkl}, we have
\begin{equation}\label{eq_area1}
z_{b_{jikl,n}}-z_{b_{ijkl,n}}=\frac{1}{2}(1-e^{i\alpha_{i,n}})(1-e^{i\alpha_{j,n}})(z_{a_j}-z_{a_i})\end{equation}
and
\begin{equation}\label{eq_area2}
z_{b_{ijlk,n}}-z_{b_{ijkl,n}}=\frac{1}{2}e^{i(\alpha_{i,n}+\alpha_{j,n})}(1-e^{i\alpha_{k,n}})(1-e^{i\alpha_{l,n}})(z_{a_l}-z_{a_k}).
\end{equation}
Hence,
\begin{align*}
\mathcal{A}_{P_{ijkl,n}}=&\frac{1}{4}\IM[(1-e^{i\alpha_{i,n}})(1-e^{i\alpha_{j,n}})(z_{a_j}-z_{a_i})\cdots  \\
&\quad\overline{e^{i(\alpha_{i,n}+\alpha_{j,n})}(1-e^{i\alpha_{k,n}})(1-e^{i\alpha_{l,n}})(z_{a_l}-z_{a_k})}] \\
=&\frac{1}{4}\IM[(1-e^{-i\alpha_{i,n}})(1-e^{-i\alpha_{j,n}})(1-e^{-i\alpha_{k,n}})\cdots  \\
&\quad(1-e^{-i\alpha_{l,n}})(z_{a_j}-z_{a_i})\overline{(z_{a_l}-z_{a_k})}].
\end{align*}
Now using $\alpha_{l,n}+\alpha_{i,n}+\alpha_{j,n}+\alpha_{k,n}=(2n+1)\pi$ one can readily show that
\begin{align*}
I:&=(1-e^{-i\alpha_{i,n}})(1-e^{-i\alpha_{j,n}})(1-e^{-i\alpha_{k,n}})(1-e^{-i\alpha_{l,n}}) \\
&=2i\Big(\sin(\alpha_{i,n})+\sin(\alpha_{j,n})+\sin(\alpha_{k,n})-\sin(\alpha_{i,n}+\alpha_{j,n})\cdots \\
&\quad-\sin(\alpha_{i,n}+\alpha_{k,n})-\sin(\alpha_{j,n}+\alpha_{k,n})+\sin(\alpha_{i,n}+\alpha_{j,n}+\alpha_{k,n})\Big).
\end{align*}
It is thus a purely imaginary number.
Therefore, we have
$$\mathcal{A}_{P_{ijkl,n}}=\frac{1}{4}\IM[I]\RE[(z_{a_j}-z_{a_i})\overline{(z_{a_l}-z_{a_k})}].$$
By a permutation of the indices we obtain
\begin{align*}
&\mathcal{A}_{P_{ikjl,n}}=\frac{1}{4}\IM[I]\RE[(z_{a_k}-z_{a_i})\overline{(z_{a_l}-z_{a_j})}] \\
&\mathcal{A}_{P_{ilkj,n}}=\frac{1}{4}\IM[I]\RE[(z_{a_l}-z_{a_i})\overline{(z_{a_j}-z_{a_k})}].
\end{align*}
Hence $\mathcal{A}_{P_{ijkl,n}}-\mathcal{A}_{P_{ikjl,n}}-\mathcal{A}_{P_{ilkj,n}}$ is proportional to
$$\RE[(z_{a_j}-z_{a_i})\overline{(z_{a_l}-z_{a_k})}-(z_{a_k}-z_{a_i})\overline{(z_{a_l}-z_{a_j})}-(z_{a_l}-z_{a_i})\overline{(z_{a_j}-z_{a_k})}]$$
which is easily seen to vanish.
\\
From $R_{ijkl,n}P_{ijkl,n}=P_{klij,-n-1}'$ the parallelograms $P_{ijkl,n}$ and $P_{klij,-n-1}$ are congruent but have opposite orientations, so $\mathcal{A}_{P_{ijkl,n}}=-\mathcal{A}_{P_{klij,-n-1}}$.
Then from $R_{ij,n}P_{ijkl,n}=P_{klji,n}'$ we have $R_{kl,-n-1}P_{klij,-n-1}=P_{ijlk,-n-1}'$ and so $\mathcal{A}_{P_{klij,-n-1}}=-\mathcal{A}_{P_{ijlk,-n-1}}$.
Thus $\mathcal{A}_{P_{ijkl,n}}=\mathcal{A}_{P_{ijlk,-n-1}}$.
\end{enumerate}}
\end{proof}

\begin{rem}
\textit{\label{remark0}
It is easily shown that $z_{O_{ij,n}}=\mu_{i,n}z_{a_i}+\mu_{j,n}z_{a_j}$ where
$$\mu_{i,n}=\frac{1-e^{i\alpha_{i,n}}+e^{i\alpha_{j,n}}-e^{i(\alpha_{i,n}+\alpha_{j,n})}}{2(1-e^{i(\alpha_{i,n}+\alpha_{j,n})})}$$
and
$$\mu_{j,n}=\frac{1-e^{i\alpha_{j,n}}+e^{i\alpha_{i,n}}-e^{i(\alpha_{i,n}+\alpha_{j,n})}}{2(1-e^{i(\alpha_{i,n}+\alpha_{j,n})})}.$$
Now, it is clear that $\mu_{i,n}+\mu_{j,n}=1$ so that, in $\R^2$, $O_{ij,n}$ belongs to the line joining $a_i$ to $a_j$.
\\
Moreover, from $\alpha_{i,-n-1}=-\alpha_{i,n}$, we deduce from the preceding expressions that $z_{O_{ij,-n-1}}=\overline{\mu_{i,n}}z_{a_i}+\overline{\mu_{j,n}}z_{a_j}$.
However it is easily shown that
$$\mu_{i,n}=\frac{1-\cos(\alpha_{i,n})+\cos(\alpha_{j,n})-\cos(\alpha_{i,n}+\alpha_{j,n})}{2\Big(1-\cos(\alpha_{i,n}+\alpha_{j,n})\Big)}$$
and similarly
$$\mu_{j,n}=\frac{1-\cos(\alpha_{j,n})+\cos(\alpha_{i,n})-\cos(\alpha_{i,n}+\alpha_{j,n})}{2\Big(1-\cos(\alpha_{i,n}+\alpha_{j,n})\Big)}$$
so that $\mu_{i,n}$ and $\mu_{j,n}$ are real.
Hence 
$$z_{O_{ij,-n-1}}=\mu_{i,n}z_{a_i}+\mu_{j,n}z_{a_j}=z_{O_{ij,n}}.$$
}
\end{rem}

In the next two remarks, $\{i,j,k,l\}=\{1,2,3,4\}$.

\begin{rem}
\textit{\label{remark1}
Point 1 of theorem \ref{th_parallelogram1} remains valid if we replace respectively $r_{a_i,\alpha_{i,n}}$, $r_{a_j,\alpha_{j,n}}$, $r_{a_k,\alpha_{k,n}}$, $r_{a_l,\alpha_{l,n}}$, by $r_{a_i,\alpha_{i,n}+m_i\pi/2}$, $r_{a_j,\alpha_{j,n}+m_j\pi/2}$, $r_{a_k,\alpha_{k,n}+m_k\pi/2}$, $r_{a_l,\alpha_{l,n}+m_l\pi/2}$ provided that $m_i+m_j+m_k+m_l$ is a multiple of 4.
In this remark, we restrict ourself to integer $m_i$'s.
To avoid repetition, we limit $m_i$ to $0\leq m_i\leq 3$ for $i\in\{1,2,3,4\}$.
Hence each parallelogram exhibited in theorem \ref{th_parallelogram1} gives in fact 64 parallelograms:
\begin{itemize}
\item 1 for $\{m_i,m_j,m_k,m_l\}=\{0,0,0,0\}$,
\item 1 for $\{m_i,m_j,m_k,m_l\}=\{1,1,1,1\}$,
\item 12 for $\{m_i,m_j,m_k,m_l\}=\{2,1,1,0\}$,
\item 6 for $\{m_i,m_j,m_k,m_l\}=\{2,2,0,0\}$,
\item 12 for $\{m_i,m_j,m_k,m_l\}=\{3,1,0,0\}$,
\item 12 for $\{m_i,m_j,m_k,m_l\}=\{3,3,2,0\}$,
\item 6 for $\{m_i,m_j,m_k,m_l\}=\{3,3,1,1\}$,
\item 12 for $\{m_i,m_j,m_k,m_l\}=\{3,2,2,1\}$,
\item 1 for $\{m_i,m_j,m_k,m_l\}=\{2,2,2,2\}$,
\item 1 for $\{m_i,m_j,m_k,m_l\}=\{3,3,3,3\}$.
\end{itemize}
These parallelograms are the analogues of the 27 Morley's triangles, see e.g. \cite{aDobbs38}.
Note however that not all Morley's triangles are regular.
}
\end{rem}

\begin{rem}
\textit{\label{remark2}
More generally, for any $M\in\N$ and $i\in\{1,2,3,4\}$, we can replace $r_{a_i,\alpha_{i,n}}$ by $r_{a_i,\alpha_{i,n}+m_i\pi/M}$ provided that $m_i+m_j+m_k+m_l$ is a multiple of $2M$.
Of course, other combinations are possible provided than the sum on the angles added on $r_1$, $r_2$, $r_3$ and $r_4$ is a multiple of $2\pi$.
}
\end{rem}

For each $n\in\N$, theorem \ref{th_parallelogram1} exhibits 48 parallelograms.
Nevertheless, only six of them have different vertex sets, \textit{e.g.} $P_{1234,n}$, $P_{3412,n}$, $P_{1324,n}$, $P_{2413,n}$, $P_{1432,n}$ and $P_{3214,n}$.
The figure \ref{fig_parallelogram1} presents an example of a quadrilateral $P$ and these six associated parallelograms when $n=0$.
The parallelograms $P_{1234,0}$, $P_{3412,0}$, $P_{1432,0}$ and $P_{3214,0}$ are positively oriented, whereas $P_{1324,0}$ and $P_{2413,0}$ are negatively oriented and have thus a negative area.
The figure \ref{fig_parallelogram2} presents the same quadrilateral $P$ and the six parallelograms $P_{1234,n}$ for $n\in\{0,1,2,3,4,5\}$.

\begin{figure}[h!t]
    \centering
    \includegraphics[width=8.4cm]{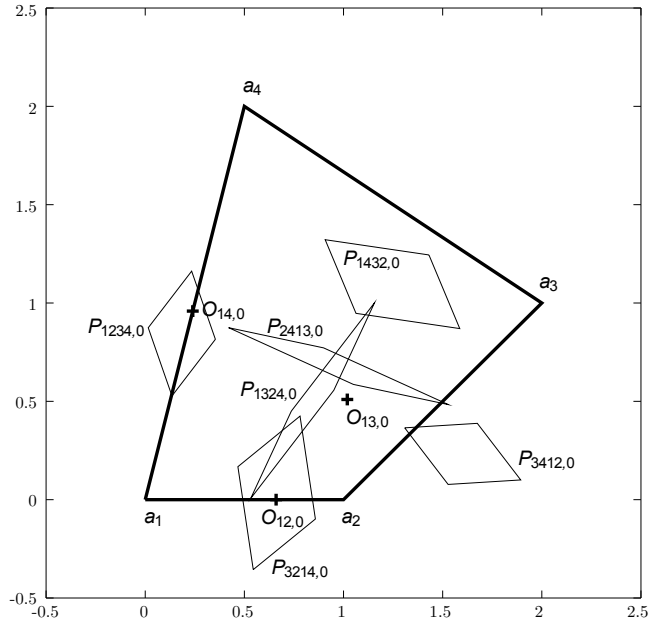}
    \caption{Example illustrating the theorem \ref{th_parallelogram1}. The coordinates of the vertices $a_1$, $a_2$, $a_3$ and $a_4$ of the quadrilateral $P$ are respectively
$(0,0)$, $(1,0)$, $(2,1)$ and $(1/2,2)$. The quadrilateral $P$, the parallelograms $P_{1234,0}$, $P_{3412,0}$, $P_{1324,0}$, $P_{2413,0}$, $P_{1432,0}$, $P_{3214,0}$ and the points $O_{12,0}$, $O_{13,0}$, $O_{14,0}$ are shown.}
    \label{fig_parallelogram1}
\end{figure}

\begin{figure}[h!t]
    \centering
    \includegraphics[width=8.4cm]{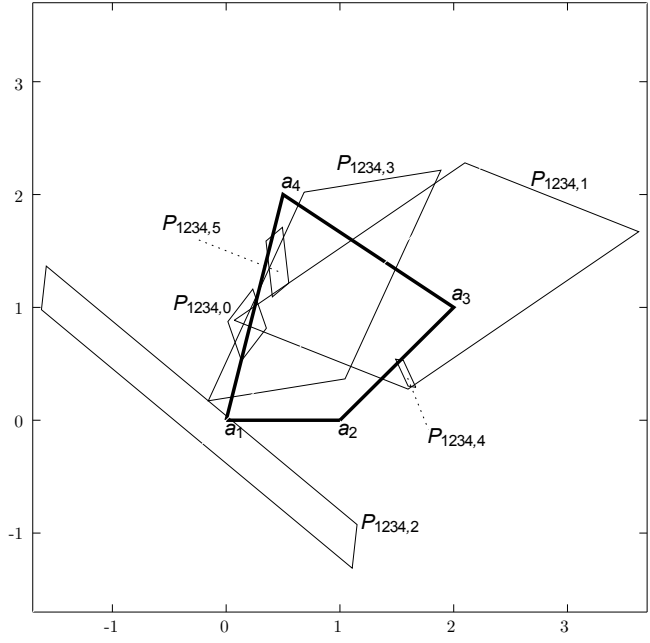}
    \caption{Example illustrating the theorem \ref{th_parallelogram1}. The coordinates of the vertices $a_1$, $a_2$, $a_3$ and $a_4$ of the quadrilateral $P$ are respectively
$(0,0)$, $(1,0)$, $(2,1)$ and $(1/2,2)$. The quadrilateral $P$ and the parallelograms $P_{1234,0}$, $P_{1234,1}$, $P_{1234,2}$, $P_{1234,3}$, $P_{1234,4}$, $P_{1234,5}$ are shown.}
    \label{fig_parallelogram2}
\end{figure}

\section{Squares from any parallelogram}
We now reconsider theorem \ref{th_parallelogram1} when $P$ is itself a parallelogram.

\begin{theorem}
\textit{\label{th_parallelogram2}
Let $P$ be a parallelogram and denote its four vertices by $a_1$, $a_2$, $a_3$ and $a_4$ and its center by $C$; for $i\in\{1,2,3,4\}$ let $\hat{a}_i$ be the angle at the vertex $a_i$, and for any $n\in\Z$ define $\alpha_{i,n}$ by $\alpha_{i,n}:=\frac{2n+1}{2}\hat{a}_i$.
Let $\mathcal{S}$ be the set composed of the tuples $(1,2,3,4)$, $(1,4,3,2)$ and of their cyclic permutations, and let $(i,j,k,l)$ be in $\mathcal{S}$.
Finally, let $b_{ijkl,n}$ be the fixed point of the rotation $r_{a_i,\alpha_{i,n}}r_{a_j,\alpha_{j,n}}r_{a_k,\alpha_{k,n}}r_{a_l,\alpha_{l,n}}$.
Then the following holds:
\begin{enumerate}
\item
the following quadrilaterals are squares
$$P_{ijkl,n}:=[b_{ijkl,n},b_{ijlk,n},b_{jilk,n},b_{jikl,n}]$$
$$P_{ijkl,n}':=[b_{ijkl,n},b_{jikl,n},b_{jilk,n},b_{ijlk,n}]$$
$$P_{ijlk,n}:=[b_{ijlk,n},b_{ijkl,n},b_{jikl,n},b_{jilk,n}]$$
$$P_{ijlk,n}':=[b_{ijlk,n},b_{jilk,n},b_{jikl,n},b_{ijkl,n}];$$
\item
$P_{ijkl,n}$ is congruent to $P_{klij,-n-1}$; More precisely, $b_{ijkl,n}=b_{lkji,-n-1}$, so that the rotation through $\pi$ about the center of $P_{ijkl,n}$ tranforms $P_{ijkl,n}$ into $P_{klij,-n-1}'$;
\item
$P_{ijkl,n}$ is congruent to $P_{klji,n}$; More precisely, let $R_{ij,n}$ be the rotation through the angle $-\frac{2n+1}{2}\pi$ about the center $O_{ij,n}$ whose coordinate is
$$z_{O_{ij,n}}=\frac{1-\cos(\alpha_{i,n})+(-1)^n\sin(\alpha_{i,n})}{2}z_{a_i}+\frac{1+\cos(\alpha_{i,n})-(-1)^n\sin(\alpha_{i,n})}{2}z_{a_j};$$
then $R_{ij,n}P_{ijkl,n}=P_{klji,n}'$;
\item
the rotation through $\pi$ about $C$ transforms $P_{ijkl,n}$ into $P_{klij,n}'$;
\item
let $C_{ijkl,n}$ be the center of the square $P_{ijkl,n}$.
Then all the centers $C_{ijkl,n}$ group into two lines, one for the even $n$, the other for the odd $n$; Moreover these lines are perpendicular to the side $a_ia_j$.
\item
the diagonal $b_{ijkl,n}b_{jilk,n}$ of $P_{ijkl,n}$ is parallel to $a_ia_j$;
\item
the quadrilateral $P_n:=[C_{ijkl,n},C_{kjil,n},C_{klij,n},C_{ilkj,n}]$ is a parallelogram;
Moreover, the center of $P_n$ coincides with the center $C$ of $P$.
\end{enumerate}
}
\end{theorem}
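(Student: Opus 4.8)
The plan is to write down the coordinates of the four centers $C_{ijkl,n}$, $C_{kjil,n}$, $C_{klij,n}$ and $C_{ilkj,n}$, and then to check directly that the two diagonals $C_{ijkl,n}C_{klij,n}$ and $C_{kjil,n}C_{ilkj,n}$ of $P_n$ have one and the same midpoint, namely $C$. Since a quadrilateral whose diagonals bisect each other is a parallelogram, this establishes both assertions at once.

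First I would record that, since $P_{ijkl,n}=[b_{ijkl,n},b_{ijlk,n},b_{jilk,n},b_{jikl,n}]$, its center $C_{ijkl,n}$ is the midpoint of the diagonal $b_{ijkl,n}b_{jilk,n}$, so that $z_{C_{ijkl,n}}=\tfrac{1}{2}\big(z_{b_{ijkl,n}}+z_{b_{jilk,n}}\big)$. Writing $\zeta_m:=e^{i\alpha_{m,n}}$ and regrouping the eight terms furnished by equation \ref{eq_z_b_ijkl}, this becomes
$$z_{C_{ijkl,n}}=\tfrac{1}{4}\Big[z_{a_i}(1-\zeta_i)(1+\zeta_j)+z_{a_j}(1-\zeta_j)(1+\zeta_i)+\zeta_i\zeta_j\big(z_{a_k}(1-\zeta_k)(1+\zeta_l)+z_{a_l}(1-\zeta_l)(1+\zeta_k)\big)\Big],$$
an identity valid for an arbitrary quadrilateral.

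Next I would feed in the parallelogram hypothesis. For $(i,j,k,l)\in\mathcal{S}$ the pairs $\{i,j\}$ and $\{k,l\}$ are edges of $P$, while $\{i,k\}$ and $\{j,l\}$ are its diagonals. Since $P$ is a parallelogram: its diagonals bisect each other at $C$, hence $z_{a_i}+z_{a_k}=z_{a_j}+z_{a_l}=2z_C$; its opposite angles are equal, hence $\hat a_i=\hat a_k$ and $\hat a_j=\hat a_l$, so that $\zeta_i=\zeta_k=:\zeta$ and $\zeta_j=\zeta_l=:\eta$; and its consecutive angles are supplementary, hence $\alpha_{i,n}+\alpha_{j,n}=\tfrac{2n+1}{2}\pi$, so that $\zeta\eta=e^{i(2n+1)\pi/2}=(-1)^n i=:\epsilon$, which satisfies $\epsilon^2=-1$. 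With these substitutions the formula above collapses to
$$z_{C_{ijkl,n}}=\tfrac{1}{4}\Big[(1-\zeta)(1+\eta)\big(z_{a_i}+\epsilon z_{a_k}\big)+(1-\eta)(1+\zeta)\big(z_{a_j}+\epsilon z_{a_l}\big)\Big].$$
Replacing $(i,j,k,l)$ by $(k,l,i,j)$, $(k,j,i,l)$ and $(i,l,k,j)$, all of which are again in $\mathcal{S}$, gives the analogous expressions for $z_{C_{klij,n}}$, $z_{C_{kjil,n}}$ and $z_{C_{ilkj,n}}$ with the same $\zeta$, $\eta$ and $\epsilon$.

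Finally I would add the coordinates of opposite vertices of $P_n$. Using $z_{a_i}+z_{a_k}=z_{a_j}+z_{a_l}=2z_C$, the identity $(1-\zeta)(1+\eta)+(1-\eta)(1+\zeta)=2(1-\zeta\eta)=2(1-\epsilon)$, and $(1+\epsilon)(1-\epsilon)=1-\epsilon^2=2$, one finds
$$z_{C_{ijkl,n}}+z_{C_{klij,n}}=\tfrac{1+\epsilon}{4}\big[(1-\zeta)(1+\eta)+(1-\eta)(1+\zeta)\big]\cdot 2z_C=2z_C,$$
and the very same computation gives $z_{C_{kjil,n}}+z_{C_{ilkj,n}}=2z_C$. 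Hence the two diagonals of $P_n=[C_{ijkl,n},C_{kjil,n},C_{klij,n},C_{ilkj,n}]$ have common midpoint $C$, so $P_n$ is a parallelogram whose center is $C$. I do not expect any genuine obstacle here: the argument rests only on equation \ref{eq_z_b_ijkl} and on elementary facts about parallelograms. The single point calling for care is the combinatorics of $\mathcal{S}$ — keeping track, for each of the four tuples involved, of which pairs of indices are edges of $P$ (so that the corresponding product of $\zeta$'s equals $\epsilon$) and which are diagonals (so that the corresponding $\zeta$'s are equal and the corresponding vertices of $P$ have coordinates summing to $2z_C$). Once that bookkeeping is pinned down, the four formulas for the centers are strictly parallel and the two midpoint computations are identical.
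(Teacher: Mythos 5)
Your computation is correct as far as it goes, and for the part it covers it is essentially the paper's own argument: the paper likewise writes $z_{C_{ijkl,n}}=\tfrac12(z_{b_{ijkl,n}}+z_{b_{jilk,n}})$, expands via equation \ref{eq_z_b_ijkl}, applies the parallelogram relations \ref{cond_parallelogram} to get $z_{C_{ijkl,n}}+z_{C_{klij,n}}=z_{a_i}+z_{a_k}=2z_C$, and concludes by the bisecting-diagonals criterion. Your bookkeeping with $\zeta$, $\eta$, $\epsilon=(-1)^n i$ is sound, the tuples $(k,l,i,j)$, $(k,j,i,l)$, $(i,l,k,j)$ do all lie in $\mathcal{S}$ with the same edge/diagonal structure, and the identity $(1-\zeta)(1+\eta)+(1-\eta)(1+\zeta)=2(1-\epsilon)$ together with $(1+\epsilon)(1-\epsilon)=2$ does yield $2z_C$.

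The genuine problem is coverage: the statement you were asked to prove has seven items, and your proposal addresses only item 7. Nothing in it establishes that $P_{ijkl,n}$, $P_{ijkl,n}'$, $P_{ijlk,n}$, $P_{ijlk,n}'$ are squares (item 1, which in the paper follows from comparing equations \ref{eq_area1} and \ref{eq_area2} under \ref{cond_parallelogram} to show two adjacent sides are equal and perpendicular), nor the congruences and explicit rotations of items 2--4, nor the collinearity of the centers $C_{ijkl,n}$ for fixed parity of $n$ and the perpendicularity to $a_ia_j$ (item 5), nor the parallelism of the diagonal $b_{ijkl,n}b_{jilk,n}$ to $a_ia_j$ (item 6). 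Item 1 in particular is not optional even for your own argument's phrasing, since $C_{ijkl,n}$ is defined in the theorem as the center of the \emph{square} $P_{ijkl,n}$. The machinery you set up (the factored form of $z_{C_{ijkl,n}}$ and the relations $\zeta_i=\zeta_k$, $\zeta_j=\zeta_l$, $\zeta\eta=(-1)^n i$) is exactly what the paper uses for the remaining items as well, so the missing parts are within reach, but as written the proof is incomplete.
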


\begin{proof}
\textit{\begin{enumerate}
\item
If $P=[a_1,a_2,a_3,a_4]$ is a parallelogram and if $(i,j,k,l)$ is in $\mathcal{S}$ then
\begin{subequations}\label{cond_parallelogram}
\begin{align}
	&\alpha_{i,n}=\alpha_{k,n} \label{cond_parallelogram1}\\
	&\alpha_{j,n}=\alpha_{l,n} \label{cond_parallelogram2}\\
	&\alpha_{i,n}+\alpha_{j,n}=\frac{2n+1}{2}\pi \label{cond_parallelogram3}\\
	&z_{a_i}-z_{a_j}+z_{a_k}-z_{a_l}=0. \label{cond_parallelogram4}
\end{align}
\end{subequations}
\\
Now, from equations \ref{eq_area2}, \ref{cond_parallelogram} and \ref{eq_area1} we have successively
\begin{align*}
z_{b_{ijkl,n}}-z_{b_{ijlk,n}}&=\frac{1}{2}e^{i(\alpha_{i,n}+\alpha_{j,n})}(1-e^{i\alpha_{k,n}})(1-e^{i\alpha_{l,n}})(z_{a_k}-z_{a_l}) \\
&=\frac{-(-1)^ni}{2}(1-e^{i\alpha_{i,n}})(1-e^{i\alpha_{j,n}})(z_{a_i}-z_{a_j}) \\
&=-(-1)^ni(z_{b_{ijkl,n}}-z_{b_{jikl,n}}).
\end{align*}
The quadrilateral $P_{ijkl,n}:=[b_{ijkl,n},b_{ijlk,n},b_{jilk,n},b_{jikl,n}]$ has thus two adjacent edges of the same length and that form a right angle.
Since we know from theorem \ref{th_parallelogram1} that it is a parallelogram, it is a square.
Similarly, we show that
$$z_{b_{ijlk,n}}-z_{b_{ijkl,n}}=(-1)^ni(z_{b_{ijlk,n}}-z_{b_{jilk,n}})$$
so that $P_{ijlk,n}$ is a square.
That $P_{ijkl,n}'$ and $P_{ijlk,n}'$ are also squares is obvious.
\item
This is an immediate consequence of point 2 in theorem \ref{th_parallelogram1}.
\item
This is an immediate consequence of points 3 in theorem \ref{th_parallelogram1}, remark \ref{remark0} and equation \ref{cond_parallelogram3}.
\item
We show that for $(i,j,k,l)\in\mathcal{S}$, $C$ sits in the middle of the segment joining $b_{ijkl,n}$ to $b_{klij,n}$.
From equation \ref{eq_z_b_ijkl}, we have
\begin{align*}
&\frac{z_{b_{ijkl,n}}+z_{b_{klij,n}}}{2} \\
=&\frac{1}{4}\Big\{(1+e^{i(\alpha_{k,n}+\alpha_{l,n})})\Big(z_{a_i}(1-e^{i\alpha_{i,n}})+z_{a_j}e^{i\alpha_{i,n}}(1-e^{i\alpha_{j,n}})\Big) \\
&\qquad+(1+e^{i(\alpha_{i,n}+\alpha_{j,n})})\Big(z_{a_k}(1-e^{i\alpha_{k,n}})+z_{a_l}e^{i\alpha_{k,n}}(1-e^{i\alpha_{l,n}})\Big)\Big\}.
\end{align*}
Use of equations \ref{cond_parallelogram} leads easily to:
$$\frac{z_{b_{ijkl,n}}+z_{b_{klij,n}}}{2}=\frac{z_{a_i}+z_{a_k}}{2}$$
which is of course $z_C$.
This means that the rotation $r_{C,\pi}$ maps $b_{ijkl,n}$ into $b_{klij,n}$.
Therefore, we have
\begin{align*}
r_{C,\pi}P_{ijkl,n}&=[b_{klij,n},b_{lkij,n},b_{lkji,n},b_{klji,n}] \\
&=P_{klij,n}'.
\end{align*}
\item
When $(i,j,k,l)\in\mathcal{S}$ we obtain from equations \ref{eq_z_b_ijkl} and \ref{cond_parallelogram} that
\begin{align*}
C_{ijkl,n}&=\frac{z_{b_{ijkl,n}}+z_{b_{jilk,n}}}{2} \\
&=\frac{1}{4}\Big\{\Big(1-e^{i\alpha_{i,n}}+(-1)^nie^{-i\alpha_{i,n}}-(-1)^ni\Big)\Big(z_{a_i}+(-1)^niz_{a_k}\Big)\cdots \\
&\qquad+\Big(1+e^{i\alpha_{i,n}}-(-1)^nie^{-i\alpha_{i,n}}-(-1)^ni\Big)\Big(z_{a_j}+(-1)^niz_{a_l}\Big)\Big\} \\
\end{align*}
so that
\begin{align*}
&C_{ijkl,n}-C_{ijkl,n-2} \\
=&\frac{1}{4}\Big\{\Big(-e^{i\alpha_{i,n}}+e^{i\alpha_{i,n-2}}+(-1)^nie^{-i\alpha_{i,n}}-(-1)^nie^{-i\alpha_{i,n-2}}\Big)\Big(z_{a_i}+(-1)^niz_{a_k}\Big)\cdots \\
&\quad+\Big(e^{i\alpha_{i,n}}-e^{i\alpha_{i,n-2}}-(-1)^nie^{-i\alpha_{i,n}}+(-1)^nie^{-i\alpha_{i,n-2}}\Big)\Big(z_{a_j}+(-1)^niz_{a_l}\Big)\Big\}.
\end{align*}
But $e^{i\alpha_{i,n}}-e^{i\alpha_{i,n-2}}$ is easily seen to be $2i\sin(\hat{a}_i)e^{i\alpha_{i,n-1}}$, so that
\begin{align*}
&C_{ijkl,n}-C_{ijkl,n-2} \\
&=\frac{i\sin(\hat{a}_i)}{2}\Big\{\Big(-e^{i\alpha_{i,n-1}}-(-1)^nie^{-i\alpha_{i,n-1}}\Big)\Big(z_{a_i}+(-1)^niz_{a_k}\Big)\cdots \\
&\qquad+\Big(e^{i\alpha_{i,n-1}}+(-1)^nie^{-i\alpha_{i,n-1}}\Big)\Big(z_{a_j}+(-1)^niz_{a_l}\Big)\Big\}. \\
&=\frac{i\sin(\hat{a}_i)}{2}\Big(e^{i\alpha_{i,n-1}}+(-1)^nie^{-i\alpha_{i,n-1}}\Big)\Big(1-(-1)^ni\Big)(z_{a_j}-z_{a_i}) \\
&=i\sin(\hat{a}_i)\Big(\cos(\alpha_{i,n-1})+(-1)^n\sin(\alpha_{i,n-1})\Big)(z_{a_j}-z_{a_i}).
\end{align*}
Hence, the vector joining $C_{ijkl,n-2}$ to $C_{ijkl,n}$ is obtained by applying to the vector joining $a_i$ to $a_j$ a dilation by $\sin(\hat{a}_i)\big(\cos(\alpha_{i,n-1})+(-1)^n\sin(\alpha_{i,n-1})\big)$ followed by a rotation through $\pi/2$ about the origin.
In particular, all the $C_{ijkl,n}$ with $n$ even belong to a unique line, and this line is perpendicular to the line joining $a_i$ to $a_j$.
Similarly, all the $C_{ijkl,n}$ with $n$ odd belong to a unique line, and this line is perpendicular to the line joining $a_i$ to $a_j$.
\item
Uses of equations \ref{eq_z_b_ijkl} and \ref{cond_parallelogram} and straightforward manipulations lead to
\begin{align*}
z_{b_{ijkl,n}}-z_{b_{jilk,n}}=(z_{a_i}-z_{a_j})\Big(1-\cos(\alpha_{i,n})-(-1)^n\sin(\alpha_{i,n})\Big).
\end{align*}
Hence $z_{b_{ijkl,n}}-z_{b_{jilk,n}}$ is proportional with a real coefficient to $z_{a_i}-z_{a_j}$, so that the diagonal $b_{ijkl,n}b_{jilk,n}$ of $P_{ijkl,n}$ is parallel to $a_ia_j$.
\item
The center $C_{ijkl,n}$ of $P_{ijkl,n}$ has coordinate $z_{C_{ijkl,n}}=\frac{z_{b_{ijkl,n}}+z_{b_{jilk,n}}}{2}$.
From equation \ref{eq_z_b_ijkl}, this is equal to
\begin{align*}
z_{C_{ijkl,n}}&=\frac{1}{4}\Big\{z_{a_i}(1-e^{i\alpha_{i,n}})(1+e^{i\alpha_{j,n}})+z_{a_j}(1-e^{i\alpha_{j,n}})(1+e^{i\alpha_{i,n}})\cdots \\
&\quad+e^{i(\alpha_{i,n}+\alpha_{j,n})}\Big(z_{a_k}(1-e^{i\alpha_{k,n}})(1+e^{i\alpha_{l,n}})+z_{a_l}(1-e^{i\alpha_{l,n}})(1+e^{i\alpha_{k,n}})\Big)\Big\}
\end{align*}
so that
\begin{align*}
&z_{C_{ijkl,n}}+z_{C_{klij,n}} \\
=&\frac{1}{4}\Big\{(1+e^{i(\alpha_{k,n}+\alpha_{l,n})})\Big(z_{a_i}(1-e^{i\alpha_{i,n}})(1+e^{i\alpha_{j,n}})+z_{a_j}(1-e^{i\alpha_{j,n}})(1+e^{i\alpha_{i,n}})\Big)\cdots \\
&+(1+e^{i(\alpha_{i,n}+\alpha_{j,n})})\Big(z_{a_k}(1-e^{i\alpha_{k,n}})(1+e^{i\alpha_{l,n}})+z_{a_l}(1-e^{i\alpha_{l,n}})(1+e^{i\alpha_{k,n}})\Big)\Big\}.
\end{align*}
Now if $P$ is a parallelogram the relations \ref{cond_parallelogram} allow to simplify this expression:
$$z_{C_{ijkl,n}}+z_{C_{klij,n}}=z_{a_i}+z_{a_k}.$$
A permutation between $i$ and $k$ shows that $z_{C_{ijkl,n}}+z_{C_{klij,n}}=z_{C_{kjil,n}}+z_{C_{ilkj,n}}$, so that
$P_n:=[C_{ijkl,n},C_{kjil,n},C_{klij,n},C_{ilkj,n}]$ is a parallelogram.
As a byproduct, the center of $P_n$ coincides with the center of $P$.
\end{enumerate}
}
\end{proof}

\begin{rem}
\textit{\label{remark3}
The relations \ref{cond_parallelogram} are not satisfied by all of the 64 parallelograms exhibited in remark \ref{remark1}.
Indeed, $\alpha_{i,n}+m_i\pi/2+\alpha_{j,n}+m_j\pi/2=\pi/2\,mod(\pi)$ if and only if $m_i+m_j$ is even, and $\alpha_{i,n}=\alpha_{k,n}$ and $\alpha_{j,n}=\alpha_{l,n}$ if and only if $m_i=m_k$ and $m_j=m_l$, respectively.
Thus, any square exhibited in theorem \ref{th_parallelogram2} gives in fact 8 squares for
\begin{align*}
(m_i,m_j,m_k,m_l)\in\{&(0,0,0,0),(1,1,1,1),(2,2,2,2),(3,3,3,3),\cdots \\
&(0,2,0,2),(2,0,2,0),(1,3,1,3),(3,1,3,1)\}.
\end{align*}}
\end{rem}

For each $n\in\N$, theorem \ref{th_parallelogram2} exhibits 32 squares.
Nevertheless, only four of them have different vertex sets, \textit{e.g.}  $P_{1234,n}$, $P_{3214,n}$, $P_{3412,n}$, $P_{1432,n}$.
The figure \ref{fig_square1} presents an example of a parallelogram $P$ and these four squares when $n=0$.
The figure \ref{fig_square2} presents the same parallelogram $P$ and the six squares $P_{1234,n}$ for $n\in\{0,1,2,3,4,5\}$.

\begin{figure}[h!t]
    \centering
    \includegraphics[width=8.4cm]{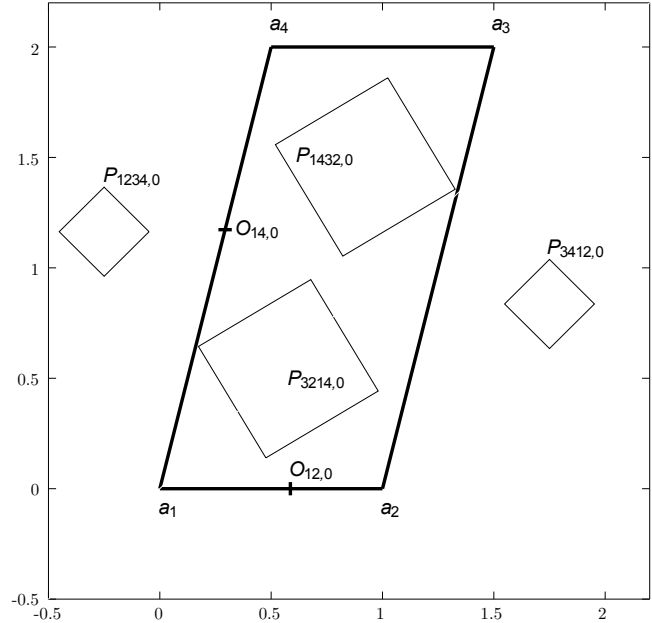}
    \caption{Example illustrating the theorem \ref{th_parallelogram2}. The coordinates of the vertices $a_1$, $a_2$, $a_3$ and $a_4$ of the parallelogram $P$ are respectively
$(0,0)$, $(1,0)$, $(3/2,2)$ and $(1/2,2)$. The parallelogram $P$, the squares $P_{1234,0}$, $P_{3214,0}$, $P_{3412,0}$, $P_{1432,0}$ and the points $O_{12,0}$, $O_{14,0}$ are shown.}
    \label{fig_square1}
\end{figure}

\begin{figure}[h!t]
    \centering
    \includegraphics[width=8.4cm]{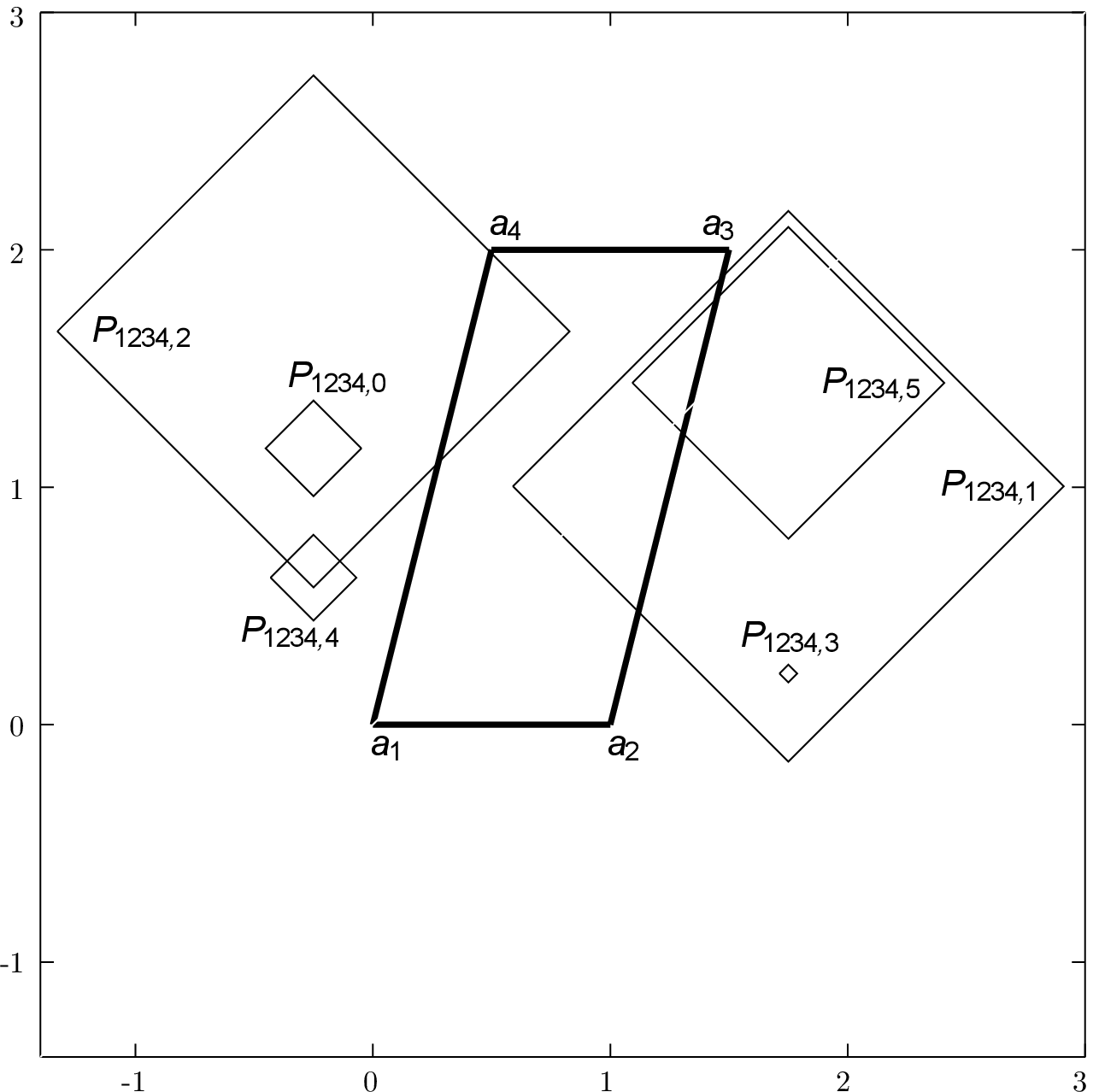}
    \caption{Example illustrating the theorem \ref{th_parallelogram2}. The coordinates of the vertices $a_1$, $a_2$, $a_3$ and $a_4$ of the parallelogram $P$ are respectively
$(0,0)$, $(1,0)$, $(3/2,2)$ and $(1/2,2)$. The parallelogram $P$ and the squares $P_{1234,0}$, $P_{1234,1}$, $P_{1234,2}$, $P_{1234,3}$, $P_{1234,4}$, $P_{1234,5}$ are shown.}
    \label{fig_square2}
\end{figure}

\section{Conclusion and outlook}
Starting from any quadrilateral $P$, the theorem \ref{th_parallelogram1} reveals essentially six families of parallelograms.
For each of these parallelograms, the theorem \ref{th_parallelogram2} reveals essentially four families of squares.

Point 1 of theorem \ref{th_parallelogram1} can be generalized to any polygon with an even number of edges.
For example, if $a_1$, $a_2$, $a_3$, $a_4$, $a_5$, $a_6$ denote the six points of a hexagon, $\hat{a}_i$ is the angle at the vertex $a_i$, $\alpha_{i,n}=\frac{2n+1}{4}\hat{a}_i$ for $n\in\Z$ and $r_i=r_{a_i,\alpha_{i,n}}$ so that $r_ir_jr_kr_lr_pr_q$ is a rotation through $(2n+1)\pi$ when $\{i,j,k,l,p,q\}=\{1,2,3,4,5,6\}$, then
\begin{align*}
&(r_1r_2r_3r_4r_5r_6)(r_1r_2r_3r_5r_6r_4)(r_2r_3r_1r_5r_6r_4)\cdots \\
&\quad(r_2r_3r_1r_6r_4r_5)(r_3r_1r_2r_6r_4r_5)(r_3r_1r_2r_4r_5r_6) \\
&=r_1r_2r_3r_4r_5r_6r_1r_2r_3(r_5r_6r_4r_2r_3r_1r_5r_6r_4r_2r_3r_1)\cdots \\
&\quad(r_6r_4r_5r_3r_1r_2r_6r_4r_5r_3r_1r_2)r_4r_5r_6 \\
&=id.
\end{align*}
This means that
\begin{equation*}
z_{b_{123456}}-z_{b_{123564}}+z_{b_{231564}}-z_{b_{231645}}+z_{b_{312645}}-z_{b_{312456}}=0
\end{equation*}
where $b_{ijklpq}$ is the fixed point of $r_ir_jr_kr_lr_pr_q$.
Hence, the hexagon joining the points $b_{123456}$, $b_{123564}$, $b_{231564}$, $b_{231645}$, $b_{312645}$ and $b_{312456}$ has not twelve degrees of freedom, but only ten.
We could for example inquire about a procedure which, when applied a sufficient number of times, eventually leads to a regular hexagon.

\bigskip

\subsection*{Acknowledgment}
The author gratefully aknowledges fruitful discussions with Oleg Ogievetsky.

\bigskip

\bigskip

\bigskip

\bigskip

\bigskip

INSTITUT PPRIME

CNRS - UNIVERSITE DE POITIERS - ISAE-ENSMA

SP2MI - 11 BOULEVARD MARIE ET PIERRE CURIE

F86962 FUTUROSCOPE CHASSENEUIL, FRANCE

\textit{E-mail address}: \texttt{pierre.godard@univ-poitiers.fr}\bigskip


\begin{thebibliography}{9}
\bibitem{aConnes98} Connes, A., \textit{A new proof of {M}orley's theorem} Publ. Math. IHES
\textbf{S88(1998)}, 43-46.
\bibitem{aDobbs38} Dobbs, W.J., \textit{Morley's Triangle} The Mathematical Gazette,
\textbf{22(1938)}, 50-57.
\end{thebibliography}
\end{document}